\tikzset{
  labl/.style={anchor=south, rotate=90, inner sep=.5mm}
}
\tikzset{
  labr/.style={anchor=north, rotate=90, inner sep=1mm}
}
\DeclareMathOperator{\End}{End}
\newcommand{\field}[1]{\mathbb #1}
\renewcommand{\phi}{\varphi}
\newcommand{\C}{\field C}
\newcommand{\Z}{\field Z}
\newcommand{\Q}{\field Q}
\newcommand{\N}{\field N}
\renewcommand{\P}{\field P}
\DeclareMathOperator{\Aut}{\operatorname{Aut}}
\newtheoremstyle{fauxsubsub}
  {3pt}
  {0pt}
  {}
  {0pt}
  {}
  {.}%
  {.5em}
  {}
\newtheorem{lem}{Lemma}[section]
\numberwithin{equation}{lem}
\newtheorem{thm}[lem]{Theorem}
\newtheorem*{mainthm}{Main Theorem}
\newtheorem*{corollary*}{Corollary}
\newtheorem{prop}[lem]{Proposition}
\newtheorem{cor}[lem]{Corollary}
\theoremstyle{definition}
\newtheorem{defn}[lem]{Definition}
\newtheorem{example}[lem]{Example}
\theoremstyle{definition}
\newtheorem*{remark*}{Remark}
\newtheorem*{remarks*}{Remarks}
\newtheorem{ques}[lem]{Question}
\theoremstyle{fauxsubsub}
\numberwithin{equation}{lem}
\author{Yuya Sasaki}
\title{Nonnatural automorphisms of the Hilbert scheme of two points of
some simple abelian varieties}
\begin{document}

\begin{abstract}
  We construct nonnatural automorphisms of the Hilbert scheme of two
  points of some simple abelian varieties preserving the big diagonal
  by considering automorphisms of the $n$-th product of the abelian
  varieties. 
\end{abstract}

\maketitle

\section{Introduction}

Throughout this paper, we work over the field $\C$ of complex numbers.
Let $X$ be a smooth projective variety, and for $n \in \N_{\ge 2}$,
let $X^{[ n ]}$ be the Hilbert scheme of $n$ points on $X$. 
An automorphism $\sigma$ of $X^{[ n ]}$ is called natural if
$\sigma = f^{[ n ]}$ for some $f \in \Aut ( X )$. This terminology seemes
to originate in a paper of Boissi\`{e}re \cite{Boi12}, and there are some
interesting problems with this naturality, for example, the
existence of nonnatural automorphisms and an explicit description of them.
When $X$ is a K3 surface, there may exist nonnatural automorphisms,
see \cite{BCNWS16, Cat19, Ogu16, Zuf19}.
When $X$ is a weak Fano surface or a surface of general type,
\cite{BOR20} proves that there does not exist such an automorphism
except in the case where $n = 2$ and $X$ is a product of smooth curves
(cf. \cite{Hay20} also studied the case of weakly del Pezzo surfaces as
well as rational elliptic surfaces).
In \cite{BOR20}, it is
also proved that there exists no nonnatural automorphism when $X =
\P^m$ for $m \ge 2$ and $n = 2$, and this result is generalized to
the case where $X \subset \P^m$ is a hypersurface of degree $d \ge 2$ for
$n = 2$, $m \ge 4$ in \cite{Wang23}. Besides these, there are some
problems concerning naturality (cf. \cite{HT16}).

One of such problems is the relation between naturality and the big
diagonal. First, we define the big diagonal $\Delta \subset X^n$
as follows: \[
  \Delta := \left\{ \left( x_1, \ldots, x_n \right) | \exists i, j
  \ \text{with} \ i \neq j, \ x_i = x_j \right\}
\]
and call this the big diagonal of $X^n$. Also, we define the big diagonal
$\Delta^{( n )}$ of $X^{( n )}$ to be the image of $\Delta$
by the quotient map $X^n \to X^{( n )}$ and the big
diagonal $\Delta^{[ n ]}$ of $X^{[ n ]}$ by $\Delta^{[ n ]} :=
\epsilon^{- 1} \left( \Delta^{( n )} \right)$ with $\epsilon$ the
Hilbert-Chow morphism. Then, $\Delta^{[ n ]}$ is the exceptional
divisor of $\epsilon$. 
The following question is posed in \cite{BOR20}:

\begin{ques}[{\cite[Question 3]{BOR20}}]
  Suppose $X$ is a smooth projective surface and $\sigma \colon
  X^{[ n ]} \to X^{[ n ]}$ is an automorphism preserving the big
  diagonal $\Delta^{[ n ]}$. Except for the case $\left( C_1 \times C_2
  \right)^{[ 2 ]}$, where $C_1$ and $C_2$ are smooth projective curves
  and $n = 2$, does it follow that $\sigma$ is natural?
  \label{bor}
\end{ques}

It is known that this holds for K3 surfaces as a result of
Boissi\`{e}re and Sarti \cite{BS12} and for Enriques surfaces by Hayashi
\cite{Hay17}.
Also, it is proved by \cite{BOR20} that this holds for weak del Pezzo
surfaces and surfaces of general type. 
In the case where $n = 2$ and $X$ is a product of smooth curves, this
does not hold, especially when either both curves are rational or both
of genus $g \ge 2$, which is also proved in \cite{BOR20}.
Also, there is a similar result for a generalized Kummer variety, see 
\cite{BNWS10}.

When we consider this question, the following proposition is useful:

\begin{prop}[{\cite[Proposition 9, Proposition 12]{BOR20}}]
  Let $X$ be a smooth projective surface, and $n \ge 2$. Then, for
  every $\tau \in \Aut \left( X^{( n )} \right)$, there exists
  $F \in \Aut \left( X^n \right)$ such that $\tau \circ \rho = \rho
  \circ F$ with $\rho$ the quotient map $X^n \to X^{( n )}$. 
  \label{borp}
\end{prop}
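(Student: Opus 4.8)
The plan is to restrict $\tau$ to the locus of reduced cycles, lift it there across the étale quotient map, and then extend the lift over the diagonal. First I would identify the singular locus of $X^{(n)}$. Since $\dim X = 2$, a transposition in $S_n$ acts on the tangent space $\C^{2n}$ of $X^n$ at a diagonal point with fixed locus of codimension $2$, so it is not a pseudo-reflection; by Chevalley--Shephard--Todd the quotient $X^{(n)} = X^n/S_n$ is smooth exactly at the images of points with trivial stabiliser. Hence $\Sing(X^{(n)}) = \Delta^{(n)}$, and since $\tau$ is an automorphism it preserves $\Delta^{(n)}$ and restricts to an automorphism of $U^{(n)} := X^{(n)}\setminus\Delta^{(n)}$. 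Writing $U^n := X^n\setminus\Delta$, the map $\rho\colon U^n\to U^{(n)}$ is then a connected finite étale Galois cover with group $S_n$.

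Next I would reduce the existence of $F$ over $U^{(n)}$ to a statement about this cover. A lift $F_0\colon U^n\to U^n$ with $\rho\circ F_0 = \tau\circ\rho$ is exactly an isomorphism $U^n \simeq \tau^{\ast}U^n$ of covers of $U^{(n)}$; by the Galois correspondence this exists if and only if $\tau_{\ast}$ preserves the normal subgroup $\pi_1(U^n)\trianglelefteq \pi_1(U^{(n)})$, i.e.\ the kernel of the monodromy $\phi\colon \pi_1(U^{(n)})\twoheadrightarrow S_n$.

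The heart of the argument is showing that $\tau$ fixes the isomorphism class of this cover. Transverse to a general point of $\Delta^{(n)}$ the singularity of $X^{(n)}$ is an $A_1$-point $\C^2/\{\pm 1\}$, whose local fundamental group is $\Z/2$; consequently $\phi$ sends a meridian of $\Delta^{(n)}$ to a transposition. Because $\tau$ preserves $\Delta^{(n)}$ it carries meridians to meridians, so $\phi\circ\tau_{\ast}$ again sends meridians to transpositions and has full image $S_n$. This pins down $\phi$ up to an unramified twist: two such monodromies differ by a homomorphism killing all meridians, hence factoring through $\pi_1(X^{(n)})\cong H_1(X;\Z)$, so the ambiguity is a class in $\operatorname{Hom}(\pi_1(X^{(n)}),\Z/2)\cong H^1(X;\Z/2)$. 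I expect ruling out this twist to be the main obstacle: when $H^1(X;\Z/2)\ne 0$ there are genuinely several connected $S_n$-covers of $U^{(n)}$ with the correct local monodromy, all with smooth total space, and one must show $\tau$ cannot interchange them. The way I would close this is to use automorphism-invariant data on a resolution: $\tau$ lifts to the canonical (crepant) resolution $X^{[n]}\to X^{(n)}$, whose exceptional divisor $\Delta^{[n]}$ has a canonical square root $\delta\in\Pic(X^{[n]})$ with $2\delta\sim\Delta^{[n]}$; this $\delta$ is intrinsic, hence $\tau$-invariant, and it determines the ramified part of $\rho$ along $\Delta^{(n)}$, forcing $\tau_{\ast}\pi_1(U^n)=\pi_1(U^n)$ and hence producing $F_0$.

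Finally I would extend $F_0$ across the codimension-$2$ locus $\Delta$. Let $\Gamma\subset X^n\times X^n$ be the closure of the graph of $F_0$. The relation $\rho\circ F_0=\tau\circ\rho$ on the dense open $U^n$ shows $\Gamma\subseteq W:=X^n\times_{\tau\rho,\,X^{(n)},\,\rho}X^n$, and the first projection $W\to X^n$ is finite because $\rho$ is; thus $\Gamma\to X^n$ is finite and birational onto the normal variety $X^n$, hence an isomorphism by Zariski's Main Theorem. Composing its inverse with the second projection yields a morphism $F\colon X^n\to X^n$ extending $F_0$ and still satisfying $\rho\circ F=\tau\circ\rho$ by density. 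Running the same construction for $\tau^{-1}$ produces a two-sided inverse, so $F\in\Aut(X^n)$, as required.
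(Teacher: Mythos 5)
The paper itself gives no proof of this proposition --- it is quoted verbatim from \cite[Proposition 9, Proposition 12]{BOR20} --- so your attempt can only be measured against the standard argument it sketches. Your skeleton is the right one and several steps are completely correct: identifying $\Sing\bigl(X^{(n)}\bigr) = \Delta^{(n)}$ via the absence of pseudo-reflections (this is where $\dim X = 2$ enters), concluding that $\tau$ restricts to the smooth locus where $\rho$ is a connected finite \'etale $\mathfrak{S}_n$-Galois cover, reformulating the lifting as $\tau_*$-invariance of the kernel of the monodromy $\phi \colon \pi_1\bigl(U^{(n)}\bigr) \twoheadrightarrow \mathfrak{S}_n$, and the final extension of $F_0$ over the codimension-$2$ locus by taking the graph closure inside the fiber product and applying Zariski's Main Theorem to a finite birational map onto the normal variety $X^n$. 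That last paragraph is clean and needs no repair.

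The gap is exactly where you predicted it, and your proposed fix does not close it --- it restates it. The connected $\mathfrak{S}_n$-covers of $U^{(n)}$ with transposition meridians correspond (in your own Picard-theoretic translation, say for $n=2$) to the square roots of $\mathcal O\bigl(\Delta^{[2]}\bigr)$ in $\Pic\bigl(X^{[2]}\bigr)$, which form a torsor under the $2$-torsion subgroup $\Pic\bigl(X^{[2]}\bigr)[2]$. An abstract automorphism $\sigma$ lifting $\tau$ preserves the divisor $\Delta^{[2]}$, hence preserves the \emph{set} of square roots, but nothing in your argument prevents $\sigma^*\delta = \delta \otimes \eta$ for a nontrivial $2$-torsion class $\eta$; declaring $\delta$ ``intrinsic, hence $\tau$-invariant'' is precisely the assertion that $\tau_*$ fixes $\ker\phi$, i.e.\ the statement to be proven. (The class $\delta$ is canonical only relative to the Hilbert-scheme structure, e.g.\ as $-c_1$ of the tautological bundle, which an abstract automorphism is not known a priori to respect.) This is not a vacuous worry in the present paper: here $X$ is an abelian surface, so $H^1(X;\Z/2) \cong (\Z/2)^4 \neq 0$ and the competing twisted covers genuinely exist. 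Two smaller problems compound this. First, your lift of $\tau$ to $X^{[n]}$ is justified only for $n = 2$, where the Hilbert--Chow morphism is the blow-up of the singular locus and hence intrinsic; for $n \ge 3$ the resolution $X^{[n]} \to X^{(n)}$ is not the blow-up of the reduced diagonal, and lifting an arbitrary $\tau$ requires an argument (crepant resolutions are not unique in general). Second, the claim that ``two such monodromies differ by a homomorphism killing all meridians'' only typechecks for $n = 2$, where $\mathfrak{S}_2$ is abelian; for $n \ge 3$ there is no well-defined difference of homomorphisms into $\mathfrak{S}_n$, so even the parametrization of the ambiguity by $H^1(X;\Z/2)$ is unproven as stated. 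In short: correct frame, correct localization of the difficulty, but the core invariance $\tau_*\ker\phi = \ker\phi$ is asserted rather than established, and this is the actual content of the cited result in \cite{BOR20}.
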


By this proposition, for $\sigma \in \Aut \left( X^{[ n ]} \right)$,
if $\sigma$ descends to
an automorphism $\tau$ of $X^{( n )}$, $\sigma$ is natural if and only
if the automorphism $F$ of $X^n$ induced by $\sigma$ is of the form \[
  F = \alpha \circ \left( f \times \cdots \times f \right)
\]
with $\alpha \in \mathfrak{S}_n$ and $f \in \Aut X$ by the definition of
the symmetric product. Therefore,
considering automorphisms of $X^n$ satisfying the following property
can be important in Question \ref{bor}, or the general naturality problem: 

\begin{defn}
  Let $F \in \Aut \left( X^n \right)$. Then,
  $F$ is said to satisfy \textit{the symmetric condition} if it descends
  to an automorphism of $X^{( n )}$, i.e., there exists $\tau \in
  \Aut \left( X^{( n )}\right)$ such that $\tau \circ \rho = \rho
  \circ F$.
\end{defn}

It is clear that an automorphism $F$ of $X^n$ in Proposition \ref{borp}
satisfies the symmetric condition and that $F \in \Aut \left( X^n \right)$
satisfies $F ( \Delta ) = \Delta$, i.e., $F$ preserves the big diagonal.
Also, in \cite{BOR20}, these two properties are used to prove the
existence of a nonnatural automorphism when $X$ is a product of curves.

In this paper, we consider the question where $X$ is a simple abelian
variety, and first, we consider automorphisms of $X^n$ satisfying
the symmetric condition. In this paper, when considering
automorphisms of abelian varieties, they are assumed to be group
homomorphisms. 

\begin{thm}
  Let $X$ be a simple abelian variety and $n \in \N_{\ge 2}$. Then, all
  automorphisms of $X^n$ satisfying the symmetric condition are
  of the form \[
    F = \sigma \circ
    \begin{pmatrix}
      f & g & \cdots & g \\
      g & f & \ddots& \vdots \\
      \vdots & \ddots & \ddots & g \\
      g & \cdots & g & f
    \end{pmatrix}
  \]
  with $\sigma \in \mathfrak{S}_n$ a permutation matrix and $f, g \in
  \End ( X )$. 

  Conversely, all automorphisms of $X^n$ of the above form satisfy the
  symmetric condition. 
  \label{sub}
\end{thm}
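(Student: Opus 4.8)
The plan is to translate the symmetric condition into a purely group-theoretic statement inside the matrix description of $\Aut(X^n)$ and then compute a normalizer. Since $X$ is an abelian variety and (by the paper's convention) all automorphisms are group homomorphisms, we have $\End(X^n)=\mathrm{M}_n(\End X)$ and $\Aut(X^n)=\GL_n(\End X)$, and I regard $\mathfrak{S}_n\subset\GL_n(\End X)$ as the subgroup of permutation matrices, which realizes exactly the coordinate-permutation action defining $\rho\colon X^n\to X^{(n)}$. The first step is to show that $F$ satisfies the symmetric condition if and only if $F$ normalizes $\mathfrak{S}_n$ in $\GL_n(\End X)$. If $\tau\circ\rho=\rho\circ F$, then using $\rho\circ\pi=\rho$ for $\pi\in\mathfrak{S}_n$ one gets $\rho\circ(F\pi F^{-1})=\rho$; writing $G_\pi:=F\pi F^{-1}$, this says $G_\pi$ carries every point of $X^n$ into its own $\mathfrak{S}_n$-orbit. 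Hence $X^n=\bigcup_{\tau\in\mathfrak{S}_n}\ker(G_\pi-\tau)$ is a finite union of closed subgroups (each $G_\pi-\tau\in\End(X^n)$), and since $X^n$ is irreducible one kernel must be all of $X^n$, forcing $G_\pi=\tau\in\mathfrak{S}_n$. Thus $F\mathfrak{S}_nF^{-1}\subseteq\mathfrak{S}_n$, and equality follows from finiteness. Conversely, if $F$ normalizes $\mathfrak{S}_n$ then $\rho\circ F$ is $\mathfrak{S}_n$-invariant and descends to an automorphism of $X^{(n)}$, which simultaneously yields the converse direction of the theorem once the displayed matrices are identified as the centralizer.

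Next I would compute the centralizer $C$ of $\mathfrak{S}_n$. A matrix $M=(M_{ij})$ commutes with every permutation matrix exactly when $M_{ij}=M_{\pi(i)\pi(j)}$ for all $\pi$, i.e.\ $M_{ij}$ depends only on whether $i=j$; so $C$ consists precisely of the matrices with constant diagonal entry $f$ and constant off-diagonal entry $g$, which is the displayed shape. No centrality hypothesis on $f,g$ is needed here, since permutation matrices have entries in $\{0,\id_X\}$.

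The decisive step is to prove that the normalizer equals $\mathfrak{S}_n\cdot C$. For $F$ in the normalizer, conjugation $c_F(\pi):=F\pi F^{-1}$ is an automorphism of $\mathfrak{S}_n$, and I would control it geometrically: from $c_F(\pi)-\id=F(\pi-\id)F^{-1}$ we get $\ker(c_F(\pi)-\id)=F\big(\ker(\pi-\id)\big)$, so, $F$ being an isomorphism, $\dim\ker(c_F(\pi)-\id)=\dim\ker(\pi-\id)=d\cdot c(\pi)$, where $d=\dim X$ and $c(\pi)$ is the number of cycles of $\pi$ (the fixed locus of $\pi$ on $X^n$ is a copy of $X^{c(\pi)}$). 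Hence $c_F$ preserves the number of cycles; as the nonidentity elements maximizing $c(\pi)$ (namely $c=n-1$) are exactly the transpositions, $c_F$ sends transpositions to transpositions, and an automorphism of $\mathfrak{S}_n$ with this property is inner. Writing $c_F=c_\sigma$ with $\sigma\in\mathfrak{S}_n$, the product $M:=\sigma^{-1}F$ centralizes $\mathfrak{S}_n$, so $M\in C$ is of the displayed form and invertible, giving $F=\sigma\circ M$. For the converse direction, any such $\sigma\circ M$ normalizes $\mathfrak{S}_n$ since $M$ commutes with every $\pi$, hence satisfies the symmetric condition by the first step.

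I expect the main obstacle to be the case $n=6$, where $\mathfrak{S}_6$ admits an outer automorphism carrying transpositions to triple transpositions: this is precisely where "$c_F$ is inner" could fail abstractly. The dimension count $\dim\ker(c_F(\pi)-\id)=d\cdot c(\pi)$ is exactly what excludes it, since transpositions and triple transpositions have different cycle numbers; this reduces all $n$ uniformly to the statement that an automorphism of $\mathfrak{S}_n$ preserving transpositions is inner. The only other point requiring care is the irreducibility/finite-union argument in the first step, which is routine for abelian varieties.
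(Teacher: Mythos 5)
Your proof is correct, and it takes a genuinely different route from the paper's. The paper argues coordinatewise: writing $F=(f_{ij})$, it evaluates the symmetric condition on points supported in a single factor to show the columns of $F$ are permutations of one another, and then uses the fact that $F$ preserves the orbit-size strata $\Delta$ and (for $n\ge 3$) $\Delta'=\{x: \text{all but one coordinate agree}\}$ to force each column to be a permutation of $f,g,\dots,g$; the only tool it shares with you is the irreducibility trick (a finite union of kernels of endomorphisms covering $X$ forces one kernel to be everything). You instead package the symmetric condition as ``$F$ normalizes the subgroup of permutation matrices in $\GL_n(\End X)$'' --- your derivation of this equivalence is sound, and in fact you do not even need invertibility of $\tau$ in the forward direction, since $\rho\circ F\circ\pi=\tau\circ\rho\circ\pi=\tau\circ\rho=\rho\circ F$ already gives $\rho\circ(F\pi F^{-1})=\rho$ --- then compute the centralizer (correctly noting that permutation matrices have entries $0,\id_X$, so no commutativity of $\End(X)$ is needed) and show the normalizer is $\mathfrak{S}_n\cdot C$ via the fixed-locus dimension count $\dim\ker(\pi-\id)=d\cdot c(\pi)$, so that $c_F$ preserves transpositions and is therefore inner. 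This rigidity step is exactly what replaces the paper's $\Delta'$-argument, and unlike the paper you must (and do) confront the outer automorphism of $\mathfrak{S}_6$, which your dimension count excludes uniformly in $n$. What each approach buys: yours is more structural and visibly more general --- it nowhere uses simplicity of $X$, only irreducibility of $X^n$ and $\End(X^n)=\M_n(\End X)$, and it works over noncommutative $\End(X)$, which is relevant since simple abelian varieties can have quaternionic endomorphism algebras; it also cleanly isolates the descent statement (though you should spell out that $\tau$ is an automorphism by applying the same descent to $F^{-1}$, a routine point you glossed). The paper's proof, by contrast, is elementary and self-contained: it needs no classification fact about automorphisms of $\mathfrak{S}_n$, at the cost of a more ad hoc case analysis and an appeal to preservation of the diagonals.
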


Based on this, we construct the following example: 

\begin{mainthm}
  For $d \in \N_{\ge 2}$ and $n \in \N_{\ge 2}$, there exists a simple
  abelian variety $X$ of dimension $d$ and an automorphism $F$ of $X^n$
  such that $F$ satisfies the symmetric condition and is of the form \[
    F = \sigma \circ
    \begin{pmatrix}
      f & g & \cdots & g \\
      g & f & \ddots& \vdots \\
      \vdots & \ddots & \ddots & g \\
      g & \cdots & g & f
    \end{pmatrix}
  \]
  with $\sigma \in \mathfrak{S}_n$ a permutation matrix and $f, g \in
  \Aut ( X )$, $g \neq 0$.
  \label{main}
\end{mainthm}
 
Finally, as a corollary, we get a negative answer to Question \ref{bor}: 

\begin{cor}
  For $d \ge 2$, there exists a simple abelian variety $X$ of dimension
  $d$ and an automorphism $\sigma$ of $X^{[ 2 ]}$ such that
  $\sigma$ preserves the big diagonal and is nonnatural.

  When $d = 2$, this gives a negative answer to Question \ref{bor}.
  \label{cor}
\end{cor}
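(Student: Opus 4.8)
The plan is to feed the Main Theorem (with $n = 2$) into the descent mechanism supplied by Proposition \ref{borp} and the criterion following it. First I would apply the Main Theorem with $n = 2$ to produce a simple abelian variety $X$ of dimension $d$ together with an automorphism
\[
  F = \sigma_0 \circ \begin{pmatrix} f & g \\ g & f \end{pmatrix}, \qquad \sigma_0 \in \mathfrak{S}_2, \ f, g \in \Aut(X), \ g \neq 0,
\]
satisfying the symmetric condition; since $f \in \Aut(X)$ is an isogeny we also have $f \neq 0$. Writing $\iota$ for the swap involution of $X^2$, the symmetric condition forces $F\iota = \iota F$ (for $n=2$ the deck group $\{\id,\iota\}$ has order two, so conjugation sends $\iota$ to itself), and the computation $F(x,x) = \sigma_0\bigl((f+g)x,(f+g)x\bigr)$ shows $F(\Delta) = \Delta$. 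Hence $F$ preserves the big diagonal of $X^2$ and descends to an automorphism $\tau$ of $X^{(2)}$ with $\tau(\Delta^{(2)}) = \Delta^{(2)}$.

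Next I would lift $\tau$ to the Hilbert scheme. For any smooth $X$ one has the identification $X^{[2]} \cong \Bl_\Delta(X^2)/\iota$, under which the exceptional divisor maps onto $\Delta^{[2]}$ and the quotient-to-$X^{(2)}$ map is the Hilbert--Chow morphism $\epsilon$. Because $F$ preserves the center $\Delta$, it lifts to an automorphism $\widetilde F$ of $\Bl_\Delta(X^2)$ by the universal property of the blow-up; since $F$ commutes with $\iota$ and the lifting is functorial, $\widetilde F$ commutes with the lifted involution $\widetilde\iota$ and therefore descends to an automorphism $\sigma$ of $X^{[2]}$. By construction $\sigma$ descends to $\tau$ along $\epsilon$, and from $\Delta^{[2]} = \epsilon^{-1}(\Delta^{(2)})$ together with $\tau(\Delta^{(2)}) = \Delta^{(2)}$ we conclude $\sigma(\Delta^{[2]}) = \Delta^{[2]}$.

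For nonnaturality I would invoke the criterion stated after Proposition \ref{borp}: since $\sigma$ descends to $X^{(2)}$, it is natural if and only if the induced automorphism of $X^2$ — well-defined only up to the $\mathfrak{S}_2$-action — has the form $\alpha \circ (h \times h)$. The relevant coset is
\[
  \mathfrak{S}_2 \cdot F = \left\{ \begin{pmatrix} f & g \\ g & f \end{pmatrix}, \ \begin{pmatrix} g & f \\ f & g \end{pmatrix} \right\},
\]
and neither matrix is diagonal or antidiagonal precisely because both $f \neq 0$ and $g \neq 0$; thus no representative is of natural form, and $\sigma$ is nonnatural. Finally, for $d = 2$ the variety $X$ is an abelian surface which, being simple, is not isomorphic to any product $C_1 \times C_2$ of smooth projective curves (such a product would be isogenous to a product of elliptic curves). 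Hence $X^{[2]}$ lies outside the exceptional case of Question \ref{bor} yet carries a nonnatural automorphism preserving the big diagonal, the desired negative answer.

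I expect the main obstacle to be the lifting step: one must justify the identification $X^{[2]} \cong \Bl_\Delta(X^2)/\iota$ in arbitrary dimension $d$ — classical for surfaces, but requiring a word for $d \ge 3$ — and verify that the descent of $\widetilde F$ through the ramified quotient is genuinely biregular rather than merely birational. Once this is in place, every remaining step is formal given the Main Theorem.
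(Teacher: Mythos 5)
Your proposal is correct, and its skeleton is the paper's: apply the Main Theorem with $n=2$, descend $F$ to $\tau\in\Aut(X^{(2)})$, lift $\tau$ to $X^{[2]}$, and rule out naturality via the criterion following Proposition \ref{borp}, using $f\neq 0\neq g$ so that no member of the coset $\mathfrak{S}_2\cdot F$ is diagonal or antidiagonal. The one genuine divergence is the lifting step --- exactly the place you flag as the main obstacle. You realize $X^{[2]}$ as $\Bl_\Delta(X^2)/\iota$, lift $F$ to the blow-up of $X^2$, and descend through the quotient, which (as you note) requires justifying that identification for $d\ge 3$ and checking the descent is biregular. The paper sidesteps this entirely: its preliminaries record that for smooth projective $X$ of any dimension the Hilbert--Chow morphism $\epsilon\colon X^{[2]}\to X^{(2)}$ is itself the blow-up with center $\Delta^{(2)}$, so $\tau$ --- which preserves $\Delta^{(2)}$, automatically, since for $d\ge 2$ this is the singular locus of $X^{(2)}$ --- lifts directly to $\sigma\in\Aut(X^{[2]})$ by the universal property of the blow-up, and $\sigma(\Delta^{[2]})=\Delta^{[2]}$ follows from $\Delta^{[2]}=\epsilon^{-1}(\Delta^{(2)})$ exactly as in your last step; your quotient-descent worry thus dissolves. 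Your extra care elsewhere is a net gain over the paper's two-line proof: you make explicit the $\mathfrak{S}_2$-ambiguity in the lift of $\tau$ (needed for the naturality criterion to be well-posed) and the observation that a simple abelian surface is not a product of curves, so the $d=2$ example genuinely answers Question \ref{bor}. One cosmetic caveat: the Main Theorem asserts $f,g\in\Aut(X)$, but its proof only produces $f,g\in\End(X)$ with $f-g$ and $f+(n-1)g$ units (in the paper's own example $g=\sqrt{2}$ is not a unit); your nonnaturality argument needs only $f\neq 0$ and $g\neq 0$, both of which the construction does guarantee ($f=0$ would force $\alpha^{i-j}=-(n-1)$, impossible for a unit $\alpha>1$ in a totally real field), so nothing in your argument breaks, but the inference ``$f\in\Aut(X)$, hence $f\neq 0$'' should be read as resting on that weaker fact.
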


After some preliminaries in Section \ref{prel}, we prove
Theorem \ref{sub} in Section \ref{subs}, and the main theorem in
Section \ref{mains}, respectively.

\section{Preliminaries}
\label{prel}

\subsection{Hilbert schemes of $n$ points}

Let $X$ be a smooth projective variety. Let $X^{[ n ]}$ be the Hilbert
scheme of $n$ points of $X$, i.e., the Hilbert scheme of zero dimensional
subschemes of length $n$ of $X$, and $X^{( n )}$ be the $n$-th symmetric
product of $X$, i.e., the quotient of $X^n$ under the permutation action
by $\mathfrak{S}_n$. Then, we have a map $\epsilon \colon X^{[ n ]} \to
X^{( n )}$ sending a subscheme $Z \subset X$ to its support, which is
called the Hilbert-Chow morphism. When $n = 2$, $X^{( 2 )}$ is singular
precisely at the big diagonal $\Delta^{( 2 )}$ and the Hilbert-Chow
morphism is a blowup with the center $\Delta^{( 2 )}$, the big diagonal
of $X^{( 2 )}$. Also, when $X$ is a surface, $X^{[ n ]}$ is nonsingular
for any $n$ and $\Delta^{( n )}$ is the singular locus on $X^{( n )}$. 

\subsection{Endomorphisms of an abelian variety}

For an abelian variety $X$, we denote by $\End ( X )$ the endomorphism
ring of $X$, and by $\Aut ( X )$ the group of automorphisms of $X$
preserving $0$. Also, let $\End_\Q ( X ) := \End ( X ) \otimes_\Z \Q$. 
An abelian variety is called simple if it has no nontrivial abelian
subvariety. Then, the following holds: 

\begin{thm}[{\cite[Theorem 5.3.7, Corollary 5.3.8]{BL04}}]
  Given an abelian variety $X$, there is an isogeny \[
    X \to X_1^{\times n_1} \times \cdots \times X_r^{\times n_r}
  \]
  with simple abelian varieties $X_\nu$ not isogenous to each other. 
  Moreover, the abelian varieties $X_\nu$ and the integers $n_\nu$ are
  uniquely determined up to isogenies and permutations.

  Also, in this condition, \[
    \End_\Q ( X ) \cong M_{n_1} ( F_1 ) \oplus \cdots \oplus
    M_{n_r} ( F_r )
  \]
  where $F_\nu = \End_\Q ( X_\nu )$ and $M_{n} ( R )$ is the matrix ring
  of size $n \times n$ with coefficients in a ring $R$. 
  \label{pcr}
\end{thm}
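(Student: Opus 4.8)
The plan is to derive everything from the Poincar\'e complete reducibility theorem and then read off the endomorphism algebra by formal ring-theoretic manipulations.

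First I would establish the essential geometric input: for any abelian subvariety $Y \subset X$ there is a complementary abelian subvariety $Z \subset X$ with $Y \cap Z$ finite and $Y + Z = X$, so that the addition map $Y \times Z \to X$ is an isogeny. The standard construction fixes a polarization $\phi_L \colon X \to \hat X$ arising from an ample line bundle $L$; restricting $L$ to $Y$ gives a polarization on $Y$, so $\phi_{L|_Y} \colon Y \to \hat Y$ is itself an isogeny. Combining this with the dual map $\hat\iota \colon \hat X \to \hat Y$ of the inclusion $\iota \colon Y \to X$ produces the norm endomorphism $N_Y \in \End(X)$, whose associated element in $\End_\Q(X)$ is a symmetric idempotent. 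One then takes $Z := (\ker N_Y)^0$ and checks directly that $Y + Z = X$ with finite intersection.

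Next I would iterate this splitting. If $X$ is not simple it contains a nontrivial abelian subvariety, and peeling off a complement strictly lowers the dimension, so induction on $\dim X$ writes $X$ up to isogeny as a product of simple abelian varieties. Collecting the isogenous factors together yields the stated form $X \to X_1^{\times n_1} \times \cdots \times X_r^{\times n_r}$ with the $X_\nu$ simple and pairwise non-isogenous. For uniqueness the key fact is that $\Hom(A, B) = 0$ whenever $A$ and $B$ are simple and non-isogenous, since any nonzero homomorphism of simple abelian varieties is forced to be an isogeny; this pins down the multiset of isogeny classes $X_\nu$ and the multiplicities $n_\nu$.

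Finally, since $\End_\Q$ is an isogeny invariant, $\End_\Q(X) \cong \End_\Q(X_1^{\times n_1} \times \cdots \times X_r^{\times n_r})$. The same vanishing $\Hom_\Q(X_\mu^{\times n_\mu}, X_\nu^{\times n_\nu}) = 0$ for $\mu \neq \nu$ decomposes this as $\bigoplus_\nu \End_\Q(X_\nu^{\times n_\nu})$, and the usual matrix description of endomorphisms of a power gives $\End_\Q(X_\nu^{\times n_\nu}) \cong M_{n_\nu}(F_\nu)$ with $F_\nu = \End_\Q(X_\nu)$, a division ring by Schur's lemma because $X_\nu$ is simple. The main obstacle is the very first step: the existence of a complementary subvariety genuinely requires the polarization and the norm-endomorphism construction, whereas every later step is formal once that geometric splitting is available.
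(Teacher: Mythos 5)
Your proposal is correct and reproduces precisely the argument of the cited source: the paper itself gives no proof of Theorem \ref{pcr}, deferring to \cite[Theorem 5.3.7, Corollary 5.3.8]{BL04}, and your sketch (Poincar\'e complete reducibility via the norm endomorphism $N_Y$ built from a polarization, induction on $\dim X$, uniqueness from $\Hom(A,B)=0$ for non-isogenous simple factors, and $\End_\Q(X_\nu^{\times n_\nu}) \cong M_{n_\nu}(F_\nu)$ with $F_\nu$ a division algebra by Schur's lemma) is exactly the BL04 proof. You also correctly identify the one genuinely geometric input --- the existence of a complement, which needs the polarization --- with everything afterwards being formal.
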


Therefore, in order to simplify the study of endomorphisms,
we will consider simple abelian varieties. 

\section{Automorphisms of the $n$-th product of a simple abelian variety
satisfying the symmetric condition}
\label{subs}

Let $X$ be a simple abelian variety. In this section, we consider
automorphisms of $X^n$ satisfying the symmetric condition.
Then, it is enough to consider automorphisms of the type \[
  F = \left( f_{i j} \right) : X^n \to X^n
\]
where $f_{i j} : X_j \to X_i$ is an endomorphism and $X_i = X$ is the
$i$-th factor of $X^n$. In fact, an endomorphism $F \colon X^n \to X^n$
is determined by $F_i \colon X^n \to X_i$ for each $i$, and as \[
  F_i ( x_1, \ldots, x_n ) = F_i ( x_1, 0, \ldots, 0) + F_i ( 0, x_2, 0,
  \ldots, 0 ) + \cdots F_i ( 0, \ldots, 0, x_n ),
\]
any endomorphisms $F$ of $X^n$ is written in the above form
(cf. \cite[Chapter 1.2]{BL04}). Note that $F$ can be an automorphism even
if not all $f_{i j}$'s are automorphisms. 

\begin{proof}[Proof of Theorem \ref{sub}]
  For $\sigma, \tau \in \mathfrak{S}_n$, \[
    \sigma \circ
    \begin{pmatrix}
      f & g & \cdots & g \\
      g & f & \ddots & \vdots \\
      \vdots & \ddots & \ddots & g \\
      g & \cdots & g & f
    \end{pmatrix}
    \circ \tau = \sigma \circ \tau \circ
    \begin{pmatrix}
      f & g & \cdots & g \\
      g & f & \ddots & \vdots \\
      \vdots & \ddots & \ddots & g \\
      g & \cdots & g & f
    \end{pmatrix},
  \]
  Therefore, the latter statement holds. So we will prove the former. 
  Let $F =
  \left( f_{i j} \right)$ be an automorphism of $X^n$ satisfying the
  symmetric condition. Also, for $n \ge 3$, we define $\Delta' \subset
  X^n$ as follows: \[
    \Delta' := \left\{ \left( x_1, \ldots, x_n \right) \in X^n |
    \exists i \ \text{with}\ 1 \le i \le n;\ x_1 = \cdots =
    x_{i - 1} = x_{i + 1} = \cdots = x_n \right\}.
  \]
  Note that $\Delta = \Delta'$ when $n = 3$, and as $F$ is an automorphism
  preserving the big diagonal, $F$ also preserves $\Delta'$. 

  First, by the symmetric condition, the images of \[
    F \left( 0, \ldots, 0, x, 0, \ldots, 0 \right) = \left( f_{1 i}
    ( x ), \ldots, f_{n i} ( x ) \right)
  \]
  by the quotient map $X^n \to X^{( n )}$ agree for all $1 \le i \le n$
  as the images of $( 0, \ldots, 0, x, 0, \ldots, 0 )$ by $X^n \to
  X^{( n )}$ agree.

  Thus, for each $x \in X$ and $1 \le i, j \le n$, there exists $1 \le
  k' \le n$ such that \[
    f_{i 1} ( x ) = f_{k' j} ( x )
  \]
  and this implies \[
    \bigcup_{k = 1}^n \left( f_{i 1} = f_{k j} \right) = X.
  \]
  As each $\left( f_{1 i} = f_{k j} \right) \subset X$ is closed, there
  exists $1 \le k \le n$ such that $f_{1 i} = f_{k j}$, and applying the
  same argument to $f_{j i}$ for each $j$, we see that the sets \[
    \left\{ f \in \End ( X ) | \exists j \ \text{with}\ 1 \le j \le n,\ f
    = f_{j i} \right\}
  \]
  agree for all $1 \le i \le n$.  Also, by a similar argument,
  we see that $f_{1 i}, \ldots, f_{n i}$ is a permutation of $f_{1 1},
  \ldots, f_{n 1}$ for any $2 \le i \le n$. Note that there exist $1 \le
  i, j \le n,\ i \neq j$ such that $f_{1 i} \neq f_{1 j}$ as $F$ is an
  automorphism, and thus, the theorem for $n = 2$ follows from these. 

  Now, suppose $n \ge 3$. Then, as $F$ preserves $\Delta'$, \[
    F \left( x, 0, \ldots, 0 \right) = \left( f_{1 1} ( x ),
    \ldots, f_{n 1} ( x ) \right)
  \]
  is in $\Delta'$. Therefore, for each $x \in X$, there exists $1 \le k'
  \le n$ such that \[
    f_{1 1} ( x ) = \cdots = f_{k' - 1 \ 1} ( x ) = f_{k' + 1 \ 1} ( x ) =
    \cdots f_{n 1} ( x ).
  \]
  Therefore, we see that \[
    \bigcup_{1 \le k \le n} \left( f_{1 1} = \cdots = f_{k - 1 \ 1} =
    f_{k + 1 \ 1} = \cdots = f_{n 1} \right) = X,
  \]
  and by the similar argument as above, 
  we see that there exists $1 \le k \le n$
  such that for any $i', j' \neq k$, \[
    f_{i' 1} = f_{j' 1}.
  \]
  Now, suppose $f_{1 i}, \ldots, f_{n i}$ is a permutation of $f, g,
  \ldots, g$ with $f, g \in \End ( X )$ for all $1 \le i \le n$. Then, 
  as $F$ is an automorphism, $f \neq g$, and for $i \neq j$, $f_{k i}
  = f$ implies $f_{k j} = g$, otherwise it cannot be an automorphism. 
  Therefore, $F$ is of the form \[
    \sigma \circ
    \begin{pmatrix}
      f & g & \cdots & g \\
      g & f & \ddots & \vdots \\
      \vdots & \ddots & \ddots & g \\
      g & \cdots & g & f
    \end{pmatrix}
  \]
  with $\sigma \in \mathfrak{S}_n$.
\end{proof}


\section{Proof of the main theorem}
\label{mains}

\begin{proof}[Proof of the main theorem]
  Take a totally real number field $K \supset \Q$ of degree $d$. Then,
  we can take a simple abelian variety $X$ of dimension $d$ such that
  $\End_\Q ( X ) \cong K$ (cf. \cite[Chapter 9.2]{BL04}).
  We identify endomorphisms of $X$ with elements of $K$ by this
  isomorphism. Then, an endomorphism of $X$ is integral over $\Z$ as
  an element of $K$, so $\End ( X ) \subset O_K$ where $O_K$ is the
  ring of integers of $K$.
  As $K$ is totally real, there exists $\alpha \in O_K^\times$
  such that $\alpha > 1$ by Dirichlet's unit theorem.
  Then, as $\End ( X ) \subset O_K$ is of finite index (cf.
  \cite[Chapter 1.2]{BL04}, \cite[Chapter 1.2]{Neu99}), we may assume
  $\alpha \in \End ( X )$ by replacing $\alpha$ by a suitable power. 

  Then, it is enough to find $f, g \in \End
  ( X )$ such that $g \neq 0$ and \[
    \begin{pmatrix}
      f & g & \cdots & g \\
      g & f & \ddots& \vdots \\
      \vdots & \ddots & \ddots & g \\
      g & \cdots & g & f
    \end{pmatrix}
  \]
  is an automorphism of $X^n$(cf. \ref{sub}),
  and as the determinant of this matrix is \[
    \left( f - g \right)^{n - 1} \left( f + \left( n - 1 \right) g
    \right), 
  \]
  it is enough to find $f, g \in \End ( X )$ such that $g \neq 0$,
  $f - g, f + \left( n - 1 \right) g \in \Aut ( X ) = \End ( X)
  \cap O_K^\times$. 

  As $\End ( X )$ is a free $\Z$-module, let $a_0, a_1, \ldots,
  a_{d - 1}$ be a $\Z$-basis of $\End ( X )$. 
  Then, there exists $i \neq j \in \Z$ such that
  \begin{align*}
    & \alpha^i = b_0 \cdot a_0 + \cdots + b_{d - 1} a_{d - 1} \\
    & \alpha^j = c_0 \cdot a_0 + \cdots + c_{d - 1} a_{d - 1} \\
    & b_i, c_i \in \Z,\ b_i - c_i \equiv 0 \bmod n \ \ \left( 0 \le \forall
    i \le n \right)
  \end{align*}
  as combinations of coefficients of $a_i$'s modulo $n$ are finite. Then,
  there exist $f, g \in \End ( X )$ such that \[
    \begin{cases}
      f + \left( n - 1 \right) g & = \alpha^i \\
      f - g & = \alpha^j.
    \end{cases}
  \]
  In fact, \[
    \begin{cases}
      f & = \dfrac{\alpha^i + \left( n - 1 \right) \alpha^j}{n}
      = \alpha_j - \sum_i \dfrac{b_i - c_i}{n} a_i \\
      g & = \dfrac{\alpha^i - \alpha^j}{n} = \sum_i \dfrac{b_i - c_i}{n}
      a_i
    \end{cases}
  \]
  are both in $\End ( X )$, and the statement holds from this. 

\end{proof}

By applying this theorem to the $n = 2$ case, the above automorphism of
$X^2$ induces an automorphism of $X^{[ 2 ]}$ which is nonnatural
as the Hilbert-Chow morphism is a blow-up with center $\Delta^{( 2 )}$,
which proves Corollary \ref{cor}. 

Finally, we give an explicit example when $d = 2$ and $n = 2$. 

\begin{example}
  Let $K := \Q ( \sqrt{2} )$ and X be a simple abelian surface
  such that $\End_\Q ( X ) \cong K$. As in the proof of the main theorem,
  we identify an endomorphism of $X$ with an element of $K$ by this
  isomorphism. As $1 + \sqrt{2} \in O_K^\times$,
  there exists $i \in \Z_{> 0}$ such that $\left(
  1 + \sqrt{2} \right)^i \in \End \left( X \right)$. Then, as \[
    \left( 1 + \sqrt{2} \right)^i + \left( 1 - \sqrt{2} \right)^i \in
    2 \Z,
  \]
  we get a desired automorphism by setting $f, g \in \End ( X )$ so that
  \[
    \begin{cases}
      f & = \frac{1}{2} \left( \left( 1 + \sqrt{2} \right)^i +
      \left( 1 - \sqrt{2} \right)^i\right) \\
      g & = \frac{1}{2} \left( \left( 1 + \sqrt{2} \right)^i -
      \left( 1 - \sqrt{2} \right)^i \right).
    \end{cases}
  \]
\end{example}

\bibliographystyle{alpha}

\end{document}